\newtheorem{thm}{Theorem}[section]
\newtheorem{lem}[thm]{Lemma}
\newtheorem{prop}[thm]{Proposition}
\newtheorem{cor}[thm]{Corollary}
\def \a {\alpha}
\def \e {\epsilon}
\def \d {\delta}
\def\XXint#1#2#3{{\setbox0=\hbox{$#1{#2#3}{\int}$}
  \vcenter{\hbox{$#2#3$}}\kern-.5\wd0}}
\newcommand{\al}{\alpha}                \newcommand{\lda}{\lambda}
\newcommand{\va}{\varepsilon}           
\newcommand{\be}{\begin{equation}}      \newcommand{\ee}{\end{equation}}
\newcommand{\Lda}{\Lambda}              
\newcommand{\R}{\mathbb{R}}              
\begin{document}

\title{\textbf{Schauder estimates for an integro-differential equation with applications to a nonlocal Burgers equation}
\bigskip}

\author{Cyril Imbert,\ \ Tianling Jin,\ \ Roman Shvydkoy}

\date{\today}

\maketitle

\begin{abstract}
We obtain Schauder estimates for a general class of linear integro-differential equations. The estimates are applied to a scalar non-local Burgers equation and complete the global well-posedness results obtained in \cite{ISV}.
\end{abstract}

\section{Introduction}
This note studies the classical Schauder estimates for a general class of linear integro-differential equations of the form
\begin{equation}\label{eq:main}
w_t(t,x)= p.v. \int_{\R^n}(w(t,y)-w(t,x))\frac{m(t,x,y)}{|x-y|^{n+1}}dy.
\end{equation}
We assume that $m \in C^{\a}((-6,0]\times \R^n \times \R^n)$, $\lambda \leq m \leq \Lambda$, and $w \in C^{1+\a}((-6,0]\times \R^n)$, for some $\a >0$, $\lambda,\Lambda>0$. Written in terms of $w$-increments, it becomes
\begin{equation}\label{eq:main2}
w_t(t,x)= p.v. \int_{\R^n}(w(t,x+y)-w(t,x))\frac{m(t,x,x+y)}{|y|^{n+1}}dy.
\end{equation}
The kernel $K(t,x,y) = \frac{m(t,x,x+y)}{|y|^{n+1}}$ is typically assumed to satisfy an evenness condition such as $K(t,x,y) = K(t,x,-y)$ which appears naturally in the case when  equation \eqref{eq:main2} represents the generator of a L\'evy process with jumps. We do not make any such assumption. Our motivation primarily comes from studying variational or hydrodynamical models, in particular the non-local Burgers model introduced recently in the works of Lelievre \cite{Lelievre-PHD,Lelievre-art1} for viscous case and developed in the inviscid case by Imbert, Shvydkoy and Vigneron in \cite{ISV}. In this model, the classical Euler equation of conservation of momentum $u_t + u \cdot \nabla u = - \nabla p$ is replaced by a non-local variant
\begin{equation}\label{burgers}
u_t-u(-\Delta)^{\frac 12}u+(-\Delta)^{\frac 12}u^2=0\quad\mbox{in } [0,\infty)\times\R^n.
\end{equation} 
In its integral form, the equation reads
\begin{equation}\label{eq:main3}
u_t(t,x)= p.v. \int_{\R^n}(u(t,y)-u(t,x))\frac{u(t,y)}{|x-y|^{n+1}}dy,
\end{equation}
or for the new variable $w = u^2$ it takes the form of \eqref{eq:main} with 
\begin{equation}\label{eq:m}
m(t,x,y)=C(n)\frac{\sqrt{w(t,x)w(t,y)}}{\sqrt{w(t,x)}+\sqrt{w(t,y)}},
\end{equation}
where $C(n)$ is a positive dimensional constant. 

The emerged symmetry $m(t,x,y) = m(t,y,x)$ in \eqref{eq:m} allows to apply the De Giorgi regularization result in Caffarelli-Chan-Vasseur \cite{CCV} or Felsinger-Kassmann
\cite{FK},  and obtain $C^\a$ bound in space-time in terms of $L^\infty$-norm of the initial condition (note the maximum principle).  Parallel to this, the regularity theory of fully nonlinear integro-differential equations in non-divergence form was developed before in Caffarelli-Silvestre \cite{CS09}, and Lara-D{{\'a}}vila \cite{LD}. 

This implies the $C^{\a}$-regularity of $m$ for solutions bounded away from zero. However, the lack of evenness as stated above makes the equation out of the range of immediate applicability of recently obtained Schauder estimates for similar equations in non-divergence form, such as Mikulevicius-Pragarauskas \cite{MP}, Jin-Xiong \cite{JX} or most recently for the fully non-linear case by Dong-Zhang \cite{DZ}.  It therefore needs to be addressed separately to fulfill the need for higher order regularity which should come naturally from parabolic nature of the equation.

On the first step, we partially restore the evenness by ``freezing the coefficients" and introducing an active source term, i.e. rewriting \eqref{eq:main} as 
\begin{equation}\label{eq:main4}
\begin{split}
w_t(t,x)&= \int_{\R^n}(w(t,y)-w(t,x)- \nabla w(t,x) \cdot (y-x) \chi_{|y-x|\le 1})\frac{m(t,x,x)}{|x-y|^{n+1}}dy\\
&+ \int_{\R^n}(w(t,y)-w(t,x))\frac{m(t,x,y)-m(t,x,x)}{|x-y|^{n+1}}dy.
\end{split}
\end{equation}
In this form it is clear that the $C^{1+\a}$ regularity of $w$ and $C^{\a}$ regularity of $m$ are sufficient to make sense of both integrals classically. Moreover, the gradient term in the first one is superfluous due to vanishing, and thus not changing the equation. We therefore will take a more general approach and study a slightly broader class of equations, namely
\begin{equation}\label{eq:divergence form}
\begin{split}
u_t(t,x)&=\int_{\R^n}(u(t,x+y)-u(t,x))K(t,x,y)dy\\
&+\int_{\R^n}(u(t,x+y)-u(t,x))G(t,x,y)dy+f(t,x)\quad\mbox{in } (-6,0]\times\R^n,
\end{split}
\end{equation}
where $K$ and $G$ satisfy 
\begin{itemize}
\item[\textbf{(K1)}]
  $K(t,x,y)=K(t,x,-y)\quad \mbox{for all } (t,x,y)\in
  (-6,0]\times\R^n\times\R^n,$

\item[\textbf{(K2)}]
  $\lda|y|^{-n-1}\le K(t,x,y)\le \Lda |y|^{-n-1}\quad \mbox{for all }
  (t,x,y)\in (-6,0]\times\R^n\times\R^n,$

\item[\textbf{(K3)}]
  $|K(t_1,x_1,y)-K(t_2,x_2,y)|\le \Lambda
  (|x_1-x_2|^{\alpha} +|t_1-t_2|^{\al})|y|^{-n-1}$
  for all  $(t_1,x_1,y)$, $(t_2,x_2,y)\in (-6,0]\times\R^n\times\R^n.$

\item[\textbf{(G1)}]
  $|G(t,x,y)|\le \Lda \min(1,|y|^\alpha)|y|^{-n-1}\quad \mbox{for all
  } (t,x,y)\in (-6,0]\times\R^n\times \R^n,$
\item[\textbf{(G2)}]
  $|G(t_1, x_1,y)-G(t_2,x_2,y)| \le \Lambda \min(|x_1-x_2|^{\alpha}
  +|t_1-t_2|^{\al},|y|^\alpha)|y|^{-n-1}$
  for all $(t_1,x_1,y)$, $(t_2,x_2,y)\in (-6,0]\times\R^n\times\R^n.$

\end{itemize}
Note that $K$ is  assumed to be even in $y$, but $G$ is \emph{not} assumed to be even in $y$, and all the assumptions are satisfied if $K$ and $G$ are derived  from \eqref{eq:main4}. We assume that $f$ is a passive source term independent of the solution. 
In such formulation of the original equation we can view \eqref{eq:divergence form} as a perturbation of the symmetric case and use \cite{DZ,JX} to obtain the higher
order regularity estimates for \eqref{eq:divergence form} and hence for \eqref{eq:main}.

\begin{thm}\label{thm:schauder-div}
  Suppose $u\in C^{1+\a}((-6,0])\times\R^n)$ is a solution of \eqref{eq:divergence form} with $f \in C^\alpha_{x,t} ((-6,0])\times\R^n)$. Suppose $K$ and
  $G$ satisfy {\rm (K1), (K2), (K3), (G1), (G2)}. Then for
  every $\beta<\alpha$, there exists $C>0$ depending only on
  $n,\lda,\Lda,\alpha,\beta$ such that
\begin{equation}
\|u\|_{C^{1+\beta}_{x,t}((-1,0])\times\R^n)}\le C (\|u\|_{L^\infty((-6,0])\times\R^n)}+\|f\|_{C^\beta_{x,t}((-6,0])\times\R^n)}).
\end{equation}
\end{thm}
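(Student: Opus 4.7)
The plan is to view \eqref{eq:divergence form} as a perturbation of a purely symmetric non-local equation of order 1 and absorb the $G$-piece into the source term. Write
\[
L_K u(t,x)=\int_{\R^n}(u(t,x+y)-u(t,x))K(t,x,y)\,dy,\qquad L_G u(t,x)=\int_{\R^n}(u(t,x+y)-u(t,x))G(t,x,y)\,dy,
\]
so that the equation becomes $u_t-L_K u=L_G u+f$. The operator $L_K$ satisfies (K1)--(K3), i.e.\ it is the kind of symmetric non-divergence operator of order $1$ covered by \cite{JX} and \cite{DZ}, so the strategy is to apply their Schauder estimate to $L_K$ with $g:=L_G u+f$ as source, and then show that $L_G u$ is of \emph{strictly lower order} than $L_K$ thanks to the extra factor $\min(1,|y|^{\alpha})$ in (G1)--(G2).

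The first main step is a quantitative Hölder estimate on $L_G u$. For $u\in C^{1+\beta_0}_{x,t}$ with $0<\beta_0<\alpha$, I would show
\[
\|L_G u\|_{C^{\beta}_{x,t}}\le C\,\|u\|_{C^{1+\beta'}_{x,t}}
\]
for some $\beta'<\beta<\alpha$, by splitting
\[
L_G u(t_1,x_1)-L_G u(t_2,x_2)=\int(u(t_1,x_1+y)-u(t_1,x_1))\bigl[G(t_1,x_1,y)-G(t_2,x_2,y)\bigr]dy+\int\Delta_{12} u(y)\,G(t_2,x_2,y)\,dy,
\]
where $\Delta_{12}u(y)=(u(t_1,x_1+y)-u(t_1,x_1))-(u(t_2,x_2+y)-u(t_2,x_2))$. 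For the first integral, (G2) gives $|G_1-G_2|\le\Lambda\min(|x_1-x_2|^\alpha+|t_1-t_2|^\alpha,|y|^\alpha)|y|^{-n-1}$, which combined with the $C^1$ bound on $u$ for $|y|\le 1$ and the $L^\infty$ bound for $|y|>1$ yields an $\alpha$-Hölder control. For the second integral, one uses $\Delta_{12}u$ together with the $C^{1+\beta'}$ regularity of $u$, while the decay $\min(1,|y|^\alpha)$ in (G1) makes the integral converge both at $y=0$ (combined with $C^1$-bounds) and at $y=\infty$.

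The second step is to invoke the symmetric Schauder estimate of \cite{JX} (or \cite{DZ}) applied to $u_t-L_K u=L_G u+f$:
\[
\|u\|_{C^{1+\beta}_{x,t}((-1,0]\times\R^n)}\le C\bigl(\|u\|_{L^\infty}+\|L_G u\|_{C^{\beta}_{x,t}}+\|f\|_{C^{\beta}_{x,t}}\bigr).
\]
Combining with the lower-order estimate from the previous step and interpolating $\|u\|_{C^{1+\beta'}_{x,t}}\le\varepsilon\|u\|_{C^{1+\beta}_{x,t}}+C_\varepsilon\|u\|_{L^\infty}$ (valid since $\beta'<\beta$), one can choose $\varepsilon$ small enough to absorb the $\|u\|_{C^{1+\beta}}$ contribution on the right and conclude. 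A standard localization/cut-off reduces the global estimate to local ones near $(-1,0]$, and a short bootstrap starting from the assumed $u\in C^{1+\alpha}$ regularity reaches any target exponent $\beta<\alpha$.

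The main obstacle is the Hölder estimate on $L_G u$: because $G$ itself is only Hölder (not smooth) and not even, both pieces of the splitting must be controlled simultaneously, and one has to orchestrate the $\min(1,|y|^\alpha)$ factor so that the final bound is governed by an exponent $\beta'$ \emph{strictly less} than $\beta$, which is what allows the interpolation to close the estimate. A secondary concern is that \cite{JX,DZ} are typically stated for variable symmetric kernels with certain scaling normalizations; one needs to check that (K1)--(K3) fit into their hypotheses and that their estimate applies with the correct right-hand side in $C^{\beta}_{x,t}$.
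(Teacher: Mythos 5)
Your overall strategy is the same as the paper's: write the equation as $u_t = L_K u + L_G u + f$, treat $g := L_G u$ as a lower-order perturbation, apply the symmetric Schauder estimate (from \cite{DZ}/\cite{JX}) with $g+f$ as the source, show $g$ is controlled by a weaker norm of $u$, and absorb by interpolation. Your Hölder estimate for $L_G u$ is essentially Lemma~\ref{lem:aux2} of the paper; in fact the paper proves the sharper statement $\|g\|_{C^\beta}\le C\|u\|_{Lip}$ (Lipschitz rather than $C^{1+\beta'}$, with the logarithmic loss forcing $\beta<\alpha$), but your weaker version would also feed the interpolation.

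The real gap is in the absorption step, and it is not cosmetic. The symmetric Schauder estimate you invoke \emph{shrinks the time domain}: it gives
\[
\|u\|_{C^{1+\beta}((-2,0]\times\R^n)}\le C\bigl(\|u\|_{L^\infty}+\|L_Gu\|_{C^\beta}+\|f\|_{C^\beta}\bigr),
\]
with the right-hand-side norms taken on a strictly larger interval such as $(-5,0]$. After you plug in $\|L_Gu\|_{C^\beta}\lesssim\|u\|_{C^{1+\beta'}}$ and interpolate $\|u\|_{C^{1+\beta'}}\le\varepsilon\|u\|_{C^{1+\beta}}+C_\varepsilon\|u\|_{L^\infty}$, the $\varepsilon\|u\|_{C^{1+\beta}}$ term lives on the \emph{larger} domain while the left-hand side lives on the \emph{smaller} one, so ``choose $\varepsilon$ small and absorb'' does not close. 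This is exactly why the paper introduces the quantity $h(\gamma,s)$ on a family of nested intervals, rescales the equation, and then invokes the Giaquinta--Giusti iteration lemma (Lemma~\ref{lem:iteration}) to upgrade the family of inequalities $h(\beta,s)\le\tfrac12 h(\beta,\tau)+C|\tau-s|^{-1-\beta}(\cdots)$ into the desired bound. Your ``standard localization/cut-off plus bootstrap from the assumed $u\in C^{1+\alpha}$'' does not substitute for this: using the a priori qualitative $C^{1+\alpha}$ bound to close the loop would make the final constant depend on $\|u\|_{C^{1+\alpha}}$, whereas the theorem claims a constant depending only on $n,\lambda,\Lambda,\alpha,\beta$. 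You need the GG-type iteration (or an equivalent device) to make the absorption legitimate.
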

At the end of this note we will elaborate more on how to obtain global in time periodic solutions to \eqref{burgers} with the help of Theorem~\ref{thm:schauder-div} (see also the discussion in  \cite{ISV}). With regard to the higher order regularity the relation \eqref{eq:m} clearly allows to bootstrap on the gain of smoothness. We obtain the following as  a consequence.
\begin{cor}\label{cor:burgers}
  Suppose $u$ is a positive smooth periodic (in $x$) solution of
  \eqref{burgers} in $(-6,0]\times\R^n$. Then for every
  positive integer $k$, there exists $C>0$ depending only on
  $n,k,\|u\|_{L^\infty((-6,0]\times\R^n)}$ and
  $\min_{(-6,0]\times\R^n}u$ such that
\[
\|u\|_{C^k_{x,t}((-1,0]\times\R^n)}\le C.
\]
\end{cor}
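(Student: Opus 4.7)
The plan is to bootstrap Theorem~\ref{thm:schauder-div}. Since $u$ is positive, smooth, and $x$-periodic on $(-6,0]\times\R^n$, it is bounded above and away from zero by constants depending only on $\|u\|_{L^\infty((-6,0]\times\R^n)}$ and $\min_{(-6,0]\times\R^n} u$. Setting $w=u^2$, equation \eqref{burgers} transforms into \eqref{eq:main} with kernel $m$ given by \eqref{eq:m}. This $m$ is symmetric in $(x,y)$ and satisfies the uniform ellipticity $\lambda\le m\le\Lambda$ with $\lambda,\Lambda$ depending only on the above bounds for $u$.

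For the base step, I would apply the De Giorgi--Nash--Moser estimate of Caffarelli--Chan--Vasseur \cite{CCV} or Felsinger--Kassmann \cite{FK} to this symmetric equation, obtaining $w\in C^{\alpha_0}_{x,t}$ on $(-5,0]\times\R^n$ for some $\alpha_0=\alpha_0(n,\lambda,\Lambda)>0$, with a quantitative bound. Since $m$ is a smooth (indeed algebraic) function of $(w(t,x),w(t,y))$ whenever $w$ is bounded away from zero, this gives $m\in C^{\alpha_0}$ in all its arguments. Freezing coefficients as in \eqref{eq:main4}, the resulting kernels $K$ and $G$ satisfy (K1)--(K3) and (G1)--(G2), and Theorem~\ref{thm:schauder-div} applied with $f\equiv 0$ then upgrades the regularity to $w\in C^{1+\beta_0}_{x,t}$ for any $\beta_0<\alpha_0$.

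The higher-order bootstrap proceeds by iterating Theorem~\ref{thm:schauder-div} on equations satisfied by spatial difference quotients of $w$. For a fixed coordinate direction $e_i$ and $h\ne 0$, the quotient $v_h(t,x)=h^{-1}(w(t,x+he_i)-w(t,x))$ solves a linear equation of the form \eqref{eq:divergence form}, whose kernels are obtained from $m$ evaluated at translated arguments and whose source $f_h$ is built from divided differences of $w$ and of $m$. Once $w\in C^{k+\beta}_{x,t}$ has been established, the kernels and $f_h$ satisfy the hypotheses of Theorem~\ref{thm:schauder-div} uniformly in $h$ with some H\"older exponent $\beta'<\beta$; the theorem then yields $v_h\in C^{1+\beta'}_{x,t}$ uniformly in $h$, and letting $h\to 0$ produces $w\in C^{k+1+\beta'}_{x,t}$ on a slightly shrunken time cylinder. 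Iterating $k$ times yields the desired spatial regularity, and mixed space--time regularity is recovered from the equation itself, since $w_t$ equals a first-order nonlocal operator applied to $w$. Finally, $u=\sqrt{w}$ transfers the estimate to $u$ via the chain rule in the region $w\ge c>0$.

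The main technical obstacle will be verifying, at each step of the bootstrap, that the source $f_h$ in the difference-quotient equation satisfies the H\"older regularity required by Theorem~\ref{thm:schauder-div}, uniformly in $h$. This demands careful bookkeeping of derivatives of $m$ with respect to its two arguments $w(t,x)$ and $w(t,y)$, using that $m$ is a smooth algebraic function of these variables away from $w=0$, and control of the nonlocal integrals (which involve $w$ at arbitrarily distant points) via the periodicity of $u$ to handle the far-field contributions.
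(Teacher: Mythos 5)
Your proposal follows the paper's overall strategy: start from the $C^\alpha$ bound of Caffarelli--Chan--Vasseur applied to $w=u^2$, split the kernel into a frozen even part $K$ and a remainder $G$ so that {\rm (K1)--(G2)} hold, apply Theorem~\ref{thm:schauder-div} with $f\equiv 0$ to gain one derivative, and then iterate. One genuine difference: you bootstrap via spatial difference quotients $v_h=h^{-1}(w(\cdot,\cdot+he_i)-w)$ and a passage to the limit $h\to 0$, whereas the paper simply differentiates the frozen-coefficient form \eqref{eq:main4} in $x$ (and separately in $t$) -- legitimate here because $u$ is assumed smooth and only \emph{uniform} bounds are sought. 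Direct differentiation is the lighter route: it immediately yields the linear equation $\partial_t v=\mathrm{I}+\mathrm{II}+\mathrm{III}+\mathrm{IV}$ for $v=\nabla_x w$, with $\mathrm{I},\mathrm{II}$ acting on $v$ through the same $(K,G)$-type kernels and $\mathrm{III},\mathrm{IV}$ forming an explicit source built from $w$ and $\nabla_x m$; your route requires verifying the hypotheses of Theorem~\ref{thm:schauder-div} uniformly in $h$ before taking the limit, which buys you nothing under the stated smoothness hypothesis. Also, the ``mixed space--time regularity recovered from the equation'' is handled in the paper by literally repeating the argument with $\partial_t$ in place of $\nabla_x$.

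The more serious issue is that the step you flag as ``the main technical obstacle'' -- H\"older control of the new source in the differentiated equation -- is not an afterthought but the substance of the corollary, and it is exactly what Lemmas~\ref{lem:aux1} and~\ref{lem:aux2} provide. In the paper, $\mathrm{III}$ has the \emph{even} kernel $2\nabla_x m(t,x,x)/|y|^{n+1}$ applied to $w\in C^{1+\beta_1}$ and is controlled in $C^{\beta_1}$ by Lemma~\ref{lem:aux1}; $\mathrm{IV}$ has the non-even kernel $2(\nabla_x m(t,x,x+y)-\nabla_x m(t,x,x))/|y|^{n+1}$, which satisfies {\rm (G1)--(G2)} with exponent $\beta_1$ because $\nabla_x m\in C^{\beta_1}$, and is controlled in $C^{\beta_2}$ for any $\beta_2<\beta_1$ by Lemma~\ref{lem:aux2}. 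Your proposal neither invokes these lemmas nor supplies a substitute estimate; ``careful bookkeeping'' defers the whole proof. You should either cite Lemmas~\ref{lem:aux1}--\ref{lem:aux2} explicitly for the frozen and remainder parts of your source $f_h$ (with bounds uniform in $h$), or reproduce their content. Once that is in place, the rest of your outline -- iterating with a slight loss $\beta_{k+1}<\beta_k$ at each step and transferring to $u=\sqrt{w}$ using $w\ge c>0$ -- is correct.
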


\bigskip

\noindent\textbf{Acknowledgements:}  The work of R.S. is partially supported by NSF grants DMS-1210896 and DMS-1515705. The authors thank  Luis Silvestre  for motivating and fruitful discussions.

\section{Preliminary}

We first deal with the symmetric case ($G=0$). Suppose that $f(t,x)\in C^\alpha((-6,0]\times\R^n)$, and $u\in C^{1+\alpha}((-6,0]\times\R^n)$ is a solution of
\begin{equation}\label{eq:aux}
u_t(t,x)=\int_{\R^n}(u(t,x+y)-u(t,x))K(t,x,y)dy+f(t,x) \quad\mbox{in }(-6,0]\times\R^n.
\end{equation}
\begin{prop}\label{thm:schauder in xt}
Suppose $K$ satisfies  {\rm (K1), (K2), (K3)}. There exists $C>0$ depending only on $n,\lambda,\Lambda$ such that
\[
\|\nabla_x u\|_{C^\alpha((-2,0]\times\R^n)}+\|u_t\|_{C^\alpha((-2,0]\times\R^n)}\le C (\|u\|_{L^\infty((-6,0])\times\R^n)}+\|f\|_{C^\alpha((-6,0]\times\R^n)}).
\]
\end{prop}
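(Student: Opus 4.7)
The strategy is the classical freeze-and-perturb approach, with the constant-coefficient symmetric Schauder estimate of Jin--Xiong \cite{JX} serving as the off-the-shelf building block. Because (K1) makes $K$ symmetric in $y$ at every fixed $(t,x)$ and (K2) provides uniform coercivity, the hypothesis set here matches the Jin--Xiong framework for an integro-differential operator of order $2s=1$.

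First I would fix a reference point $(t_0,x_0)$ in the interior parabolic region and rewrite \eqref{eq:aux} as
\[
u_t(t,x) - L_0 u(t,x) = f(t,x) + R(t,x), \qquad L_0 v(x) = \int_{\R^n}(v(x+y)-v(x))K(t_0,x_0,y)\,dy,
\]
with perturbation
\[
R(t,x) = \int_{\R^n}(u(t,x+y)-u(t,x))\bigl(K(t,x,y)-K(t_0,x_0,y)\bigr)\,dy.
\]
By (K1), the difference $K-K(t_0,x_0,\cdot)$ is even in $y$, so $u(t,x+y)-u(t,x)$ may be symmetrised to the second-order increment $\tfrac12(u(t,x+y)+u(t,x-y)-2u(t,x))$, of size $\min(|y|^{1+\alpha},1)\|u\|_{C^{1+\alpha}}$. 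Combined with the pointwise H\"older bound from (K3), this yields $|R(t,x)|\le C\|u\|_{C^{1+\alpha}}(|t-t_0|^\alpha+|x-x_0|^\alpha)$, together with the analogous $C^\alpha_{t,x}$ seminorm control needed downstream.

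Next I would invoke the interior $C^{1+\alpha}$ Schauder estimate for the constant-coefficient symmetric operator $L_0$ proved in \cite{JX}: if $v$ solves $v_t - L_0 v = g$ on a parabolic cylinder with $g\in C^\alpha$, then $[\nabla_x v]_{C^\alpha}+[v_t]_{C^\alpha}\le C(\|v\|_{L^\infty}+[g]_{C^\alpha})$. Applying this with $g = f+R$, and then comparing $u$ to the corresponding frozen-equation solution over shrinking parabolic cylinders centred at $(t_0,x_0)$, a standard Campanato-type iteration produces a pointwise H\"older bound on $\nabla_x u$ and on $u_t$ at $(t_0,x_0)$. A covering argument extends the estimate to all of $(-2,0]\times\R^n$, while the buffer $(-6,-2]$ absorbs the non-local tails and any boundary-in-time losses incurred in the freezing.

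The main difficulty to watch is the criticality of the order $2s=1$: $\nabla u$ and $L_0 u$ share the same homogeneity, so naive rescaling produces no small parameter to iterate on. What rescues the Campanato loop is the H\"older regularity of the kernel encoded in (K3), which manufactures the $r^\alpha$ factor from the perturbation $R$ that closes the iteration. Symmetry of $K$ is essential throughout, both to interpret $L_0$ pointwise on $C^{1+\alpha}$ functions and to enforce the second-difference cancellation that places $R$ in $C^\alpha$ in the first place.
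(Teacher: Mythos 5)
Your approach is valid in outline but genuinely different from, and substantially more work than, the paper's. The paper proves Proposition~\ref{thm:schauder in xt} essentially by citation: first, the H\"older estimate of Lara--D\'avila \cite{LD} gives a quantitative $C^\gamma$ bound on $u$ in terms of $\|u\|_{L^\infty}$ and $\|f\|_{L^\infty}$ for a universal $\gamma>0$; then Theorem~1.1 of Dong--Zhang \cite{DZ} --- which is \emph{already} a Schauder estimate for variable-coefficient (indeed fully nonlinear) nonlocal parabolic equations with symmetric kernels satisfying exactly (K1)--(K3) --- yields the $C^{1+\min(\gamma,\alpha)}$ bound, and a second application pushes the exponent up to $\alpha$ if $\gamma<\alpha$. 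No freezing, perturbation estimate, or Campanato iteration is carried out in the paper. You instead rebuild the variable-coefficient Schauder estimate from a constant-coefficient one via the classical freeze-and-perturb scheme. The mechanics you describe --- symmetrizing to second differences via (K1), controlling $R$ pointwise through (K3), extracting an $r^\alpha$ smallness factor to close the critical-scaling iteration --- are the right ones and are roughly what happens \emph{inside} the proofs of \cite{JX} and \cite{DZ}, but since those references already cover H\"older $(t,x)$-dependent kernels you are reproving a theorem the paper simply invokes. Two further remarks: the paper needs the Lara--D\'avila step to seed the Dong--Zhang bootstrap with a universal exponent, whereas your argument bypasses it by assuming $u\in C^{1+\alpha}$ qualitatively and absorbing --- a legitimate but structurally different move; and the assertion that ``a standard Campanato-type iteration produces'' the pointwise bound glosses over the genuine technical issue of comparing $u$ with a frozen-coefficient solution on a bounded cylinder when the operator is nonlocal and of critical order $2s=1$ (handling the exterior data and tails is precisely where most of the length of a self-contained proof would go).
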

\begin{proof}
First of all, we know from the H\"older estimates in \cite{LD} that there exist $C,\gamma>0$ depending only on $n,\lambda,\Lambda$ such that
\[
\|u\|_{C^\gamma((-5,0]\times\R^n)}\le C (\|u\|_{L^\infty((-6,0])\times\R^n)}+\|f\|_{L^\infty((-6,0]\times\R^n)}).
\]
Then it follows from Theorem 1.1 in \cite{DZ} that
\[
\|\nabla_x u\|_{C^\beta((-4,0]\times\R^n)}+\|u_t\|_{C^\beta((-4,0]\times\R^n)}\le C (\|u\|_{L^\infty((-6,0])\times\R^n)}+\|f\|_{C^\beta((-6,0]\times\R^n)}),
\]
where $\beta=\min(\gamma,\alpha)$. If $\beta=\alpha$, then we are done. If $\beta<\alpha$, we can apply Theorem 1.1 in \cite{DZ} one more time to have
\[
\|\nabla_x u\|_{C^\alpha((-2,0]\times\R^n)}+\|u_t\|_{C^\alpha((-2,0]\times\R^n)}\le C (\|u\|_{L^\infty((-6,0])\times\R^n)}+\|f\|_{C^\alpha((-6,0]\times\R^n)}).
\]
\end{proof}
Let 
\[
Lu(t,x)=p.v.\int_{\R^n}(u(t,x+y)-u(t,x))K(t,x,y)dy.
\]
and
\begin{equation}\label{eq:g}
g(t,x)=\int_{\R^n}(u(t,x+y)-u(t,x))G(t,x,y)dy.
\end{equation}
The following calculations will be useful in proving Theorem \ref{thm:schauder-div} and Corollary \ref{cor:burgers}.
\begin{lem}\label{lem:aux1}
Let $K$ satisfy (K1), (K3) and  $|K(t,x,y)|\le \Lda |y|^{-n-1} \mbox{ for all }
  (t,x,y)\in (-6,0]\times\R^n\times\R^n$ (the lower bound in (K2) is not needed here). There exists $C>0$ depending only on $n,\Lambda$ such that
\[
\|Lu\|_{C^\alpha((-2,0]\times\R^n)}\le C\|u\|_{C^{1+\alpha}_{x,t}((-3,0]\times\R^n)}.
\]
\end{lem}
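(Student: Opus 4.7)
The plan is the standard Hölder analysis for singular integral operators with even principal part: symmetrize using (K1), replace first-order increments by second-order differences that vanish better at the origin, and split the integral into near-, intermediate-, and far-field pieces, using the sup, $C^1$, and $C^{1+\alpha}$ information on $u$ appropriately. Concretely, by (K1),
\[
Lu(t,x)=\tfrac12\int_{\R^n}\delta_y u(t,x)\,K(t,x,y)\,dy,\qquad \delta_y u(t,x):=u(t,x+y)+u(t,x-y)-2u(t,x),
\]
and the representation $\delta_y u(t,x)=\int_0^1[\nabla u(t,x+sy)-\nabla u(t,x-sy)]\cdot y\,ds$ together with $\nabla u\in C^\alpha_x$ yields $|\delta_y u(t,x)|\le C[\nabla u]_{C^\alpha_x}|y|^{1+\alpha}$, while trivially $|\delta_y u(t,x)|\le 4\|u\|_\infty$. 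Splitting the integral at $|y|=1$ and integrating against $\Lda|y|^{-n-1}$ gives the sup bound $\|Lu\|_{L^\infty}\le C\|u\|_{C^{1+\alpha}_{x,t}}$.

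For the $x$-Hölder seminorm, set $h=|x_1-x_2|$ and assume $h\le 1$ (the case $h\ge 1$ follows directly from the $L^\infty$ bound). Decompose $Lu(t,x_1)-Lu(t,x_2)=I_1+I_2$, where $I_2=\tfrac12\int\delta_y u(t,x_2)[K(t,x_1,y)-K(t,x_2,y)]\,dy$. By (K3), $|K(t,x_1,y)-K(t,x_2,y)|\le\Lda h^\alpha|y|^{-n-1}$, and combining with the pointwise bounds on $\delta_y u$ (split at $|y|=1$) gives $|I_2|\le Ch^\alpha\|u\|_{C^{1+\alpha}_{x,t}}$. For $I_1=\tfrac12\int[\delta_y u(t,x_1)-\delta_y u(t,x_2)]K(t,x_1,y)\,dy$, split $y$ into three annuli. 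On $|y|\le h$, bound each $\delta_y u(t,x_i)$ separately by $C|y|^{1+\alpha}$. On the intermediate annulus $h<|y|\le 1$, use the refined estimate
\[
|\delta_y u(t,x_1)-\delta_y u(t,x_2)|\le 2h\,[\nabla u]_{C^\alpha_x}|y|^\alpha,
\]
which follows from $\nabla_x\delta_y u(t,z)=\nabla u(t,z+y)+\nabla u(t,z-y)-2\nabla u(t,z)$ and the Hölder continuity of $\nabla u$. On $|y|>1$, use the Lipschitz bound $|u(t,x_1)-u(t,x_2)|\le\|\nabla u\|_\infty h\le\|\nabla u\|_\infty h^\alpha$ (since $h\le 1$). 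Each region contributes $Ch^\alpha\|u\|_{C^{1+\alpha}_{x,t}}$.

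The $t$-Hölder seminorm with $\tau=|t_1-t_2|\le 1$ is handled analogously: the kernel-difference piece is controlled by the time part of (K3), while for the main piece involving $\delta_y u(t_1,x)-\delta_y u(t_2,x)$ one splits $y$ at $|y|=\tau$, using $|\delta_y u(t_i,x)|\le C|y|^{1+\alpha}$ for $|y|\le\tau$ and, for $|y|>\tau$, the estimate $|\delta_y u(t_1,x)-\delta_y u(t_2,x)|\le C\tau\,[u_t]_{C^\alpha_x}\min(|y|^\alpha,1)$ obtained by writing $u(t_1,\cdot)-u(t_2,\cdot)=\int_{t_2}^{t_1}u_t\,ds$ and using the spatial Hölder continuity of $u_t$. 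The only step that is not essentially mechanical is the refined bound $|\delta_y u(t,x_1)-\delta_y u(t,x_2)|\lesssim h|y|^\alpha$ on the intermediate annulus $h<|y|\le 1$: bounding each $\delta_y u(t,x_i)$ separately by $C|y|^{1+\alpha}$ would miss the gain in $h$, while a naive spatial Lipschitz bound on $\delta_y u$ would produce a spurious $\log(1/h)$ factor. Recognizing $\nabla_x\delta_y u$ itself as a second difference of $\nabla u$ and applying the Hölder continuity of $\nabla u$ is precisely what recovers the correct scaling.
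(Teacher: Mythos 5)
Your proof is correct, and the overall blueprint is the same as the paper's: symmetrize via (K1), split the difference of $Lu$ values into a ``kernel-variation'' piece controlled by (K3) and a ``function-variation'' piece, and integrate against $\Lambda |y|^{-n-1}$ using the appropriate second-difference bounds. Your time-H\"older argument is essentially identical to the paper's (regions $B_{|t|}$, $B_{|t|}^c$, plus kernel difference), just written with $\delta_y u$ in place of the paper's increment form. The one genuine deviation is the spatial H\"older seminorm of the frozen-coefficient piece: the paper simply cites Lemma 2.4 of \cite{JX2} for the term $\int (u(t,x+y)-u(t,x)-u(t,y)+u(t,0))K(t,0,y)\,dy$, whereas you supply a self-contained three-annulus argument. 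The key insight there -- that $\nabla_x\delta_y u(t,z)=\nabla u(t,z+y)+\nabla u(t,z-y)-2\nabla u(t,z)$ is itself a second difference of $\nabla u$, so $|\delta_y u(t,x_1)-\delta_y u(t,x_2)|\lesssim h\,[\nabla u]_{C^\alpha}|y|^\alpha$ on $h<|y|\le 1$ -- is exactly what is needed to avoid the spurious logarithm and is precisely what the citation in the paper hides. So: same route, with you unpacking the one step the paper delegates to an external lemma.
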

\begin{proof}
Since $K$ is symmetric in $y$, it is elementary to check that 
\[
\|Lu(t,\cdot)\|_{L^\infty(\R^n)}\le C\|u(t,\cdot)\|_{C^{1+\alpha}(\R^n)}.
\]

For $t\in (-2,0]$ and $x \in \R^n$, we have
\[
\begin{split}
&|Lu(t,x)-Lu(0,x)|\\
=&|\int_{\R^n}(u(t,x+y)-u(t,x))K(t,x,y)dy-\int_{\R^n}(u(0,x+y)-u(0,x))K(0,x,y)dy|\\
=&|\int_{B_{|t|}}(u(t,x+y)-u(t,x)-u(0,x+y)+u(0,x))K(0,x,y)dy|\\
&+|\int_{B^c_{|t|}}(u(t,x+y)-u(t,x)-u(0,x+y)+u(0,x))K(0,x,y)dy|\\
&+|\int_{\R^n}(u(t,x+y)-u(t,x))(K(t,x,y)-K(0,x,y)dy|\\[1ex]
\le & I+II + III.
\end{split}
\]
As far as term $I$ is concerned, we use the mean value theorem and (K1) in order to get
\begin{multline*}
\int_{B_{|t|}}(u(t,x+y)-u(t,x)-u(0,x+y)+u(0,x))K(0,x,y)dy  \\
 = 
\int_{B_{|t|}}(\nabla_x u(t,x+\theta y) - \nabla_x u(t,x) -\nabla_x u(0,x+\theta y)+\nabla_x u (0,x))\cdot y K(0,x,y)dy.
\end{multline*}
Since $|K(t,x,y)|\le \Lda |y|^{-n-1}$, we obtain
\begin{align*}
I & \le \Lambda \|\nabla_x u\|_{C^\alpha((-3,0]\times\R^n)}   \int_{B_{|t|}} |y|^{1+\alpha} |y|^{-n-1} d y  \\
& \le C \|\nabla_x u\|_{C^\alpha((-3,0]\times\R^n)} |t|^\alpha.
\end{align*}

 To estimate $II$, we observe that by mean value theorem and that $|K(t,x,y)|\le \Lda |y|^{-n-1}$,
\[
\begin{split}
II&\le \int_{B^c_{|t|}}|u_t(s,x+y)-u_t(s,x)||t||K(0,x,y)|dy\quad\mbox{for some } s\in (t,0)\\
&\le \Lambda \|u_t(s,\cdot)\|_{C^\alpha(\R^n)} |t|\int_{B^c_{|t|}}\min(|y|^\alpha,1) |y|^{-n-1} dy\\
&\le C  \|u\|_{C^{1+\alpha}((-3,0]\times\R^n)} |t|^\alpha \\[1ex]
\end{split}
\]

To estimate $III$, we proceed similarly by using  (K3) in order to get
\[
\begin{split}
III&\le C\|\nabla_x u\|_{C^\alpha((-3,0]\times\R^n)} \int_{B_{1}}|y|^{1+\alpha}|K(t,x,y)-K(0,x,y)|dy\\
&+ C\|u\|_{L^\infty((-3,0]\times\R^n)}\int_{B^c_{1}}|K(t,x,y)-K(0,x,y)|dy)\\
&\le C(\|\nabla_x u\|_{C^\alpha((-3,0]\times\R^n)} +\|u\|_{L^\infty((-3,0]\times\R^n)}) |t|^\alpha.\\
\end{split}
\]

Therefore,
\[
|Lu(t,x)-Lu(0,x)|\le C\|u\|_{C^{1+\alpha}_{x,t}((-3,0]\times\R^n)}|t|^\alpha.
\]

Similarly, we have 
\[
\begin{split}
&|Lu(t,x)-Lu(t,0)|\\
=&|\int_{\R^n}(u(t,x+y)-u(t,x)-u(t,y)+u(t,0))K(t,0,y)dy|\\
&+|\int_{\R^n}(u(t,x+y)-u(t,x))(K(t,x,y)-K(t,0,y)dy|\\[1ex]
\le & C\| u(t,\cdot)\|_{C^{1+\alpha}(\R^n)}|x|^\alpha,
\end{split}
\]
where we have used the symmetry of $K$ in $y$ as above, and Lemma 2.4 of \cite{JX2} to estimate the first term in the left hand side of the inequality.

Finally, the desired estimate follows from standard translation arguments.
\end{proof}

\begin{lem}\label{lem:aux2}
Let $\beta\in(0,\alpha)$,  $G$ satisfy (G1) and (G2), and $g$ be defined as in \eqref{eq:g}. There exists $C>0$ depending only on $n,\Lambda,\beta,\alpha$ such that
\[
\|g\|_{C^\beta((-4,0]\times\R^n)}\le C\|u\|_{Lip_{x,t}((-5,0]\times\R^n)}.
\]
\end{lem}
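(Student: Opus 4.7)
The proof is a standard multi-scale dissection of the singular integral defining $g$. The starting observation is that the Lipschitz regularity of $u$ combined with (G1) yields the pointwise bound
\[
|(u(t,x+y)-u(t,x))G(t,x,y)|\le C\|u\|_{Lip_{x,t}}\min(|y|^{1+\a},1)|y|^{-n-1},
\]
whose integral over $\R^n$ is finite; this alone gives $\|g\|_{L^\infty}\le C\|u\|_{Lip_{x,t}}$. All subsequent H\"older estimates combine a bound of this form on a ``near'' region with the kernel-difference bound from (G2) on a ``far'' region.

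For the spatial seminorm, fix $x_1,x_2\in\R^n$ and set $r:=|x_1-x_2|$; we may assume $r\le 1$ since the case $r>1$ is trivial from the $L^\infty$ bound. Split $g(t,x_1)-g(t,x_2)$ along $\{|y|\le r\}$ and $\{|y|>r\}$. On $\{|y|\le r\}$, estimate each of the two terms $(u(t,x_i+y)-u(t,x_i))G(t,x_i,y)$ separately by the pointwise bound above, producing a contribution of order $r^\a$ after integrating $|y|^{\a-n}$ on $B_r$. On $\{|y|>r\}$, write $\Delta_y u(t,x):=u(t,x+y)-u(t,x)$ and use the algebraic identity
\begin{align*}
&\Delta_y u(t,x_1)G(t,x_1,y)-\Delta_y u(t,x_2)G(t,x_2,y) \\
&\qquad = [\Delta_y u(t,x_1)-\Delta_y u(t,x_2)]G(t,x_1,y)+\Delta_y u(t,x_2)[G(t,x_1,y)-G(t,x_2,y)].
\end{align*}
The first bracket is $\le 2\|u\|_{Lip_{x,t}}r$ by two applications of the spatial Lipschitz norm, which combined with (G1) gives a contribution of order $r^\a$ after integration. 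The second bracket is controlled by (G2) through $\min(r^\a,|y|^\a)|y|^{-n-1}$, paired with $|\Delta_y u|\le C\|u\|_{Lip_{x,t}}\min(|y|,1)$; the resulting integral over the borderline annulus $\{r<|y|\le 1\}$ reduces to $r^\a\int_r^1\rho^{-1}\,d\rho=Cr^\a\log(1/r)$, which is absorbed into $r^\beta$ precisely because $\beta<\a$.

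The temporal seminorm is handled identically, with $\tau:=|t_1-t_2|$ replacing $r$, the splitting performed at $|y|=\tau$, and the Lipschitz-in-$t$ component of $\|u\|_{Lip_{x,t}}$ yielding $|\Delta_y u(t_1,x)-\Delta_y u(t_2,x)|\le 2\|u\|_{Lip_{x,t}}\tau$ on the far region, while (G2) supplies $\min(\tau^\a,|y|^\a)|y|^{-n-1}$ for the kernel difference. The one technical nuisance---and the reason the estimate cannot be taken with $\beta=\a$---is precisely the logarithmic factor $\log(1/r)$ arising from the critical power $|y|^{-n}$ in the borderline annulus; otherwise every step is a direct polar-coordinate computation and an application of the triangle inequality.
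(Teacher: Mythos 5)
Your argument is correct and follows essentially the same strategy as the paper's: a product-rule decomposition separating the increment difference $\Delta_y u(\cdot,x_1)-\Delta_y u(\cdot,x_2)$ from the kernel difference $G(\cdot,x_1,y)-G(\cdot,x_2,y)$, followed by integration in polar coordinates and recognition of the logarithmic loss on the annulus $\{r<|y|\le 1\}$, which forces $\beta<\alpha$. The only cosmetic difference is that you first split geometrically at $|y|=r$ and apply the algebraic identity only on the far region (estimating the two terms separately on the near ball), whereas the paper applies the decomposition globally and lets the $\min$-bounds from (G1)--(G2) encode the near/far behavior in a single integral; the resulting estimates are identical.
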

\begin{proof}
It is clear that
\[
\|g\|_{L^\infty((-4,0]\times\R^n)}\le C(\|\nabla_x u\|_{L^\infty((-4,0]\times\R^n)}+\|u\|_{L^\infty((-4,0]\times\R^n)}).
\]
Moreover, for $(t,x)\in(-4,0]\times \R^n$,  $s \in (-4,0]$ and $\beta<\alpha$, we have
\[
\begin{split}
|g(t,x)-g(s,x)|
\le &|\int_{\R^n}(u(t,y)-u(t,x))(G(t,x,y)-G(s,x,y)dy|\\
&+|\int_{\R^n}(u(t,y)-u(t,x)-u(s,y)+u(s,x))G(s,x,y)dy|\\
\le & C \|u\|_{Lip}\int_{\R^n}\min(|y|,1)\min(|t-s|^\alpha,|y|^\alpha)|y|^{-n-1}dy\\
&+C \|u\|_{Lip}\int_{\R^n}\min(|y|,|t-s|)\min(1,|y|^\alpha)|y|^{-n-1}dy\\
\le & C\|u\|_{Lip}|t-s|^\alpha|\log |t-s||\le C\|u\|_{Lip}|t-s|^\beta
\end{split}
\]
where 
\[ \|u\|_{Lip((-5,0] \times \R^n)} = \| u\|_{L^\infty ((-5,0] \times \R^n)}+ \| u_t \|_{L^\infty ((-5,0] \times \R^n)} + \| \nabla_x u \|_{L^\infty ((-5,0] \times \R^n)}.\]
Similarly, for $(t,x) \in (-4,0] \times \R^n$, $z \in \R^n$ and $\beta <\alpha$, we have 
\[
\begin{split}
&|g(t,x)-g(t,z)|\\
&\le|\int_{\R^n}(u(t,x+y)-u(t,x))(G(t,x,y)-G(t,z,y))dy|\\
&+|\int_{\R^n}(u(t,x+y)-u(t,x)-u(t,z+y)+u(t,z))G(t,z,y)dy|\\
&\le C \|u\|_{Lip}\int_{\R^n}\min(|y|,1)\min(|x-z|^\alpha,|y|^\alpha) |y|^{-n-1}dy\\
&+C \|u\|_{Lip}\int_{\R^n}\min(|y|,|x-z|)\min(1,|y|^\alpha) |y|^{-n-1} dy\\
&\le C\|u\|_{Lip}|x-z|^\alpha|\log |x-z||\le C\|u\|_{Lip}|x-z|^\beta.
\end{split}
\]
We conclude that for $\beta < \alpha$, 
\[ \| g \|_{C^\beta ((-4,0] \times \R^n)} \le C \|u\|_{Lip}.\]

\end{proof}

We shall also need the following iteration
lemma.
\begin{lem}[Lemma 1.1 in \cite{GG}]\label{lem:iteration}
  Let $h: [T_0,T_1] \to \R$ be nonnegative and bounded. Suppose that
  for all $0\le T_0\le t<s\le T_1$ we have
\[
h(t)\le A(s-t)^{-\gamma}+\frac12 h(s)
\]
with $\gamma>0$  and $A >0$. Then
there exists $C=C(\gamma)$ such that for all $T_0\le t <s \le T_1$ we have
\[
h(t)\le C A(s-t)^{-\gamma}. 
\]
\end{lem}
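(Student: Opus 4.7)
The plan is a standard geometric iteration in the variable $s$. Fix $T_0\le t<s\le T_1$ and introduce a free parameter $\tau\in(0,1)$, to be chosen at the end. I would define the sequence
\[
t_0=t,\qquad t_{k+1}=t_k+(1-\tau)\tau^{k}(s-t),
\]
so that $t_k=t+(1-\tau^k)(s-t)$ is strictly increasing in $k$, stays in $[t,s]$, and satisfies $t_k\to s$ as $k\to\infty$. The step sizes are $t_{k+1}-t_k=(1-\tau)\tau^k(s-t)$, which decay geometrically but remain positive, so the hypothesis can be applied at each pair $(t_k,t_{k+1})$.

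Applying the assumed inequality to each pair gives
\[
h(t_k)\le A(1-\tau)^{-\gamma}\tau^{-k\gamma}(s-t)^{-\gamma}+\tfrac12 h(t_{k+1}).
\]
I would then iterate this $N$ times, multiplying the $k$th inequality by $2^{-k}$ and telescoping, to obtain
\[
h(t)\le A(1-\tau)^{-\gamma}(s-t)^{-\gamma}\sum_{k=0}^{N-1}\bigl(\tfrac12\tau^{-\gamma}\bigr)^{k}+2^{-N}h(t_N).
\]

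Now comes the only calibration step: choose $\tau$ so that $\frac12\tau^{-\gamma}<1$, for instance $\tau=2^{-1/(2\gamma)}$, which makes the geometric ratio equal to $2^{-1/2}<1$ and depends only on $\gamma$. The series then converges to some $C(\gamma)$, and the remainder $2^{-N}h(t_N)$ tends to $0$ as $N\to\infty$ because $h$ is bounded on $[T_0,T_1]$. Passing to the limit yields $h(t)\le C(\gamma)A(s-t)^{-\gamma}$, which is the claim.

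The only real subtlety is the balance in the choice of $\tau$: making $\tau$ close to $1$ keeps each step short so that the $\frac12$-contraction can absorb the iteration, but it inflates the prefactor $(1-\tau)^{-\gamma}$; conversely, $\tau$ close to $0$ blows up $\tau^{-k\gamma}$ and destroys convergence of the series. The assumed boundedness of $h$ is what makes the remainder term disappear — without it one cannot close the iteration — and this is exactly why the lemma is stated with that hypothesis.
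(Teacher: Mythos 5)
Your proof is correct: the geometric subdivision $t_k=t+(1-\tau^k)(s-t)$, the choice $\tau=2^{-1/(2\gamma)}$ making the ratio $\tfrac12\tau^{-\gamma}<1$, and the use of boundedness of $h$ to kill the remainder $2^{-N}h(t_N)$ constitute a complete argument. The paper itself gives no proof (it cites Lemma 1.1 of Giaquinta--Giusti), and your iteration is essentially the standard proof found in that reference, so there is nothing to add.
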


\section{Proofs of the main results}


\begin{proof}[Proof of Theorem \ref{thm:schauder-div}]
By Proposition~\ref{thm:schauder in xt} and Lemma \ref{lem:aux2}, 
\[
\begin{split}
\|u\|_{C^{1+\beta}((-2,0]\times\R^n)}&\le C (\|u\|_{L^\infty((-4,0])\times\R^n)}+\|g\|_{C^\beta((-4,0]\times\R^n)}+\|f\|_{C^\beta((-4,0]\times\R^n)})\\
&\le C (\|u\|_{Lip((-5,0] \times \R^n)}+\|f\|_{C^\beta((-5,0]\times\R^n)}).
\end{split}
\]
Let
\[
h(\gamma,s)= \begin{cases} 
[\nabla_x u]_{C^\gamma((s,0]\times\R^n)}+[u_t]_{C^\gamma((s,0]\times\R^n)} & \text{ if } \gamma \in (0,1) \\
\|\nabla_x u\|_{L^\infty((s,0]\times\R^n)}+\| u_t\|_{L^\infty((s,0]\times\R^n)} & \text{ if } \gamma = 0.
\end{cases}
\]
Then we just proved that
\begin{equation}\label{eq:interpolation}
h(\beta,-2)\le C (\|u\|_{L^\infty((-5,0])\times\R^n)}+\|f\|_{C^\beta((-5,0]\times\R^n)}+h(0,-5)).
\end{equation}
For every $-2<\tau<s\le -1$, if we let
\[
v(t,x)=u\left(\mu t+t_0,\mu x\right)\quad\mbox{with }\mu=\frac{s-\tau}{3},\ t_0=\frac{5s-2\tau}{3},
\]
then $v$ satisfies that 
\[
\begin{split}
v_t(t,x)&=\int_{\R^n}(v(t,x+y)-v(t,x))\tilde K(t,x,y)dy\\
&+\int_{\R^n}(v(t,x+y)-v(t,x))\tilde G(t,x,y)dy+\tilde f(t,x)\quad\mbox{in } (-6,0]\times\R^n,
\end{split}
\]
where
\[
\tilde K(t,x,y)=\mu^{n+1}K(\mu t+t_0,\mu x,\mu y),\ \tilde G(t,x,y)=\mu^{n+1}G(\mu t+t_0,\mu x,\mu y)
\]
and $\tilde f(t,x)=f(\mu t+t_0, \mu x)$. Since $\mu<1$, each of $\tilde K$, $\tilde G$ and $\tilde f$ satisfies the same assumptions on $K, G, f$, respectively. Therefore, \eqref{eq:interpolation} holds for $v$ as well.  Rescaling back to $v$, we have for $-2<\tau<s\le -\frac12$,
\[
h(\beta,s)\le \frac{C}{|\tau-s|^{1+\beta}}(\|u\|_{L^\infty((-5,0])\times\R^n)}+\|f\|_{C^\beta((-5,0]\times\R^n)})+\frac{C}{|\tau-s|^{\beta}}h(0,\tau).
\]
 By an interpolation inequality, we know that for every $\epsilon<1$, there exists $C>0$ independent of $\va$ such that 
\[
h(0,\tau) \le \epsilon h(\beta,\tau) +C\epsilon^{-\frac{1}{\beta}} \|u\|_{L^\infty((-6,0])\times\R^n)}.
\]
Choosing $\epsilon=\frac{|\tau-s|^{\beta}}{2C}$, we have that
\[
h(\beta,s)\le \frac{1}{2}h(\beta,\tau)+\frac{C}{|\tau-s|^{1+\beta}}(\|u\|_{L^\infty((-6,0])\times\R^n)}+\|f\|_{C^\beta((-5,0]\times\R^n)})).
\]
By the iteration lemma, Lemma \ref{lem:iteration}, we have that 
\[
h(\beta, -1)\le C (\|u\|_{L^\infty((-6,0])\times\R^n)}+\|f\|_{C^\beta((-6,0]\times\R^n)}).
\]
This proves Theorem \ref{thm:schauder-div}.
\end{proof}

\begin{proof}[Proof of Corollary \ref{cor:burgers}]
First of all, it follows from \cite{CCV} that a positive smooth periodic solution $w$ of \eqref{eq:main} satisfies
\[
\|w\|_{C^\alpha_{x,t}((-5,0]\times\R^n)}\le C
\]
for some $\alpha, C>0$ depending only on $n, \|w\|_{L^\infty((-6,0]\times\R^n)}$ and $\min_{(-6,0]\times\R^n}w$.

Now let $m$ be as in \eqref{eq:m}, and 
\[
K(t,x,y)=\frac{m(t,x,x)}{|y|^{n+1}} \quad\mbox{and} \quad G(t,x,y)=\frac{m(t,x,x+y)-m(t,x,x)}{|y|^{n+1}}.
\]
Then $\|m\|_{C^{\alpha}((-5,0]\times\R^n\times\R^n)}\le C$. Therefore, it is elementary to check that $K$ and $G$ satisfy the assumptions in Theorem \ref{thm:schauder-div}. Therefore, we have
\[
\|w\|_{C^{1+\beta_1}_{x,t}((-5,0]\times\R^n)}\le C(\beta_1)
\]
for all $\beta_1<\alpha$. 

Differentiating \eqref{eq:main} in $x$, we have for $v=\nabla_x w$,
\[
\begin{split}
\partial_t v&= p.v. \int_{\R^n}(v(t,x+y)-v(t,x))\frac{m(t,x,x)}{|y|^{n+1}}dy\\
&+p.v. \int_{\R^n}(v(t,x+y)-v(t,x))\frac{m(t,x,y+x)-m(t,x,x)}{|y|^{n+1}}dy\\
&+ p.v. \int_{\R^n}(w(t,x+y)-w(t,x))\frac{2\nabla_x m(t,x,x)}{|y|^{n+1}}dy\\
&+ p.v. \int_{\R^n}(w(t,x+y)-w(t,x))\frac{2(\nabla_x m(t,x,y+x)-\nabla_x m(t,x,x))}{|y|^{n+1}}dy.\\
&=I+II+III+IV.
\end{split}
\]
Here, for simplicity of the writing, we used that $m(t,x,y)=m(t,y,x)$, but actually this is not needed. The proof will go through without using this symmetry. 

Notice that $\|\nabla_x m(t,x,y+x)\|_{C^{\beta_1}_{x,t}((-5,0]\times\R^n\times\R^n)}\le C(\beta_1)$. It follows from Lemma \ref{lem:aux1} that
\[
\|III\|_{C^{\beta_1}_{x,t}((-5,0]\times\R^n)}\le \|w\|_{C^{1+\beta_1}_{x,t}((-5,0]\times\R^n)}\le C(\beta_1).
\]
It follows from Lemma \ref{lem:aux2} that
\[
\|IV\|_{C^{\beta_2}_{x,t}((-5,0]\times\R^n)}\le C(\beta_1,\beta_2)
\]
for every $\beta_2<\beta_1$. Applying Theorem \ref{thm:schauder-div}, we obtain
\[
\|v\|_{C^{1+\beta_2}_{x,t}((-5,0]\times\R^n)}\le C(\beta_1,\beta_2).
\]
That is,
\[
\|\nabla w\|_{C^{1+\beta_2}_{x,t}((-5,0]\times\R^n)}\le C(\beta_1,\beta_2).
\]
Similarly, we can differentiate \eqref{eq:main} in $t$ and obtain
\[
\|w_t\|_{C^{1+\beta_2}_{x,t}((-5,0]\times\R^n)}\le C(\beta_1,\beta_2).
\]
Then this corollary follows from keeping differentiating \eqref{eq:main} and applying Theorem \ref{thm:schauder-div} as above.
\end{proof}

Let us now address the question of global well-posedness of periodic solutions to \eqref{burgers} with positive initial data. Let $u_0$ be, say, $2\pi$-periodic with bounds $0 < m_0< u_0(x)< M_0<\infty$. Note that positive solutions to \eqref{burgers} enjoy both the maximum and minimum principles. So, the initial bounds hold a priori for all time. This preserves the  uniform parabolicity of \eqref{eq:main3}. Based on close similarity of \eqref{burgers} to the Euler equation, \cite{ISV} develops a parallel classical local well-posedness theory for \eqref{burgers} with initial condition in $H^s$ with $s>n/2+1+\e$, along with the analogue of the Beale-Kato-Majda blowup criterion. Thus, for a mollified initial data $u_\d$, $\d>0$, we have a local time interval of existence $I_\d$ enjoying the same uniform $L^\infty$ bounds from above and below. By the symmetrization \eqref{eq:m}, this solution gains $C^\a$-regularity for some $\a>0$ with bounds $\|u_\d(t)\|_{C^\a(\R^n)} \leq C t^{-\a} \|u_0\|_{L^\infty}$, for $t\in I_\d$, where $C>0$ is independent of $\d$. Similar bounds hold for the kernel $m$. Our Theorem~\ref{thm:schauder-div} now applies to provide $C^{1+\beta}$ regularity, which in particular by rescaling in time reads $\|u_\d(t)\|_{C^{\beta+1}(\R^n)} \leq C t^{-\beta-1} \|u_0\|_{L^\infty}$. Thus the BKM criterion clearly holds at the end of the interval $I_\d$, and hence $u_\d$ can be extended beyond $I_\d$ and to infinity since the bounds improve. We can now pass to the limit as $\d \to 0$ on any finite interval to obtain global weak solution starting from $L^\infty$-data, which in turn becomes $C^\infty$ instantaneously due to Corollary~\ref{cor:burgers}.

We also note that the long-time dynamics of solutions to \eqref{burgers} is described by exponentially fast convergence to a constant state, the state that is consistent with the energy conservation law for \eqref{burgers}. This dual nature of the equation, conservative and dissipative, ensures presence of the strong inverse energy cascade similar to the one observed in turbulence theory of a two dimensional fluid.

Finally we remark, that although for our purposes the assumption of $C^{1+\a}$ regularity on $w$ in Theorem~\ref{thm:schauder-div} was sufficient, the theorem still holds under any other assumption $C^{1+\e}$ unrelated to $\a$ that appears in (K1)--(G2). It is only necessary to make sense of the integral expressions classically.


\bigskip
\bigskip

\noindent C. Imbert

\noindent Department of Mathematics and Applications, CNRS \& \'Ecole Normale Sup\'erieure (Paris) \\ 
45 rue d'Ulm, 75005 Paris, France. \\ [1mm]
Email: \textsf{Cyril.Imbert@ens.fr}

\bigskip

\noindent T. Jin

\noindent Department of Mathematics, The Hong Kong University of Science and Technology\\
Clear Water Bay, Kowloon, Hong Kong

\smallskip
and
\smallskip

\noindent Department of Computing and Mathematical Sciences, California Institute of Technology \\
1200 E. California Blvd., MS 305-16, Pasadena, CA 91125, USA\\[1mm]
Email: \textsf{tianlingjin@ust.hk} / \textsf{tianling@caltech.edu}

\bigskip

\noindent R. Shvydkoy

\noindent Department of Mathematics, Statistics, and Computer Science,  M/C 249, \\
University of Illinois, Chicago, IL 60607, USA\\[1mm]
Email: \textsf{shvydkoy@uic.edu}

\end{document}